\newtheorem{theorem}{Theorem}[section]
\theoremstyle{definition}
\newtheorem{definition}[theorem]{Definition}
\newtheorem{remark}[theorem]{Remark}
\numberwithin{equation}{section} \subjclass[2010]{30C45}
\begin{document}
\keywords{Faber polynomials, bi-univalent functions, subordination, upper bound.}
\title[A Comprehensive Subclass of Analytic and Bi-Univalent Functions...]{A Comprehensive Subclass of Analytic and Bi-Univalent Functions Associated with Subordination}
\author{A. A. Amourah}
\address{A. A. Amourah: Department of Mathematics, Faculty of Science and Technology,
Irbid National University, Irbid, Jordan.}
\email{alaammour@yahoo.com}

\author{Mohamed Illafe}
\address{Mohamed Illafe: School of Engineering, Math, $\&$ Technology, Navajo Technical University, Crownpoint, NM 87313, USA.}
\email{millafe@navajotech.edu}

\begin{abstract}
In the present paper, we define a new general subclass of bi-univalent
functions involving a differential operator in the open unit disk $\mathbb{U}%
$. For this purpose, we use the Faber polynomial expansions. Several
connections to some of the earlier known results are also pointed out.

\end{abstract}
\maketitle

\section{Introduction}

Let $\mathcal{A}$ denote the class of all analytic functions $f$ defined in
the open unit disk $\mathbb{U}=\{z\in\mathbb{C}:\left\vert z\right\vert <1\}$
and normalized by the conditions $f(0)=0$ and $f^{\prime}(0)=1$. Thus each
$f\in\mathcal{A}$ has a Taylor-Maclaurin series expansion of the form:%
\begin{equation}
f(z)=z+\sum\limits_{n=2}^{\infty}a_{n}z^{n},\ \ (z\in\mathbb{U}).\text{
\ \ \ \ } \label{ieq1}%
\end{equation}

Further, let $\mathcal{S}$ denote the class of all functions $f\in\mathcal{A}$
which are univalent in $\mathbb{U}$ (for details, see \cite{Duren}; see also
some of the recent investigations \cite{C6,C4,C1,C5,C2}). And let $\mathcal{C}$ be the class of functions $\Phi(z)=1+\sum\limits_{n=1}^{\infty
}\Phi_{n}z^{n}$ that are analytic in $\mathbb{U}$ and satisfy the condition
$\operatorname{Re}(\Phi(z))>0$ in $\mathbb{U}$. By the Caratheodory's lemma
(see \cite{Duren}) we have $\left\vert \Phi(z)\right\vert \leq2.$ \\

Let the functions $f,$ $g$ be analytic in $\mathbb{U}$. If there exists a
Schwarz function $\varpi,$ is analytic in $\mathbb{U}$ under the conditions%
\[
\varpi(0)=0,\text{ }\left\vert \varpi(z)\right\vert \leq1,
\]
such that%
\[
f(z)=g(\varpi(z)),\text{ }z\in\mathbb{U},
\]
then, the function $f$ is subordinate to $g$ in $\mathbb{U}$, and
we write $f(z)\prec g(z).$ \\

The Koebe one-quarter theorem (for details, (see \cite{Duren}), we
know that the image of $\mathbb{U}$ under every function $f\in\mathcal{A}$
contains a disk of radius $\frac{1}{4}$. According to this, every function
$f\in\mathcal{A}$ has an inverse map $f^{-1}$ that satisfies the following conditions:
\begin{center}%
\[
f^{-1}(f(z))=z\ \ \ (z\in\mathbb{U}),
\]
\end{center}
and
\begin{center}%
\[
f\left(  f^{-1}(w)\right)  =w\ \ \ \left(  |w|<r_{0}(f);r_{0}(f)\geq\frac
{1}{4}\right)  .
\]
\end{center}

In fact, the inverse function is given by%
\begin{equation}
g(w)=f^{-1}(w)=w-a_{2}w^{2}+(2a_{2}^{2}-a_{3})w^{3}-(5a_{2}^{3}-5a_{2}%
a_{3}+a_{4})w^{4}+\cdots. \label{ieq2}%
\end{equation}

A function $f\in\mathcal{A}$ is said to be bi-univalent in $\mathbb{U}$ if
both $f(z)$ and $f^{-1}(z)$ are univalent in $\mathbb{U}$. Let $\Sigma$ denote
the class of bi-univalent functions in $\mathbb{U}$ given by (\ref{ieq1}).
Examples of functions in the class $\Sigma$ are%
\[
\frac{z}{1-z},\ -\log(1-z),\ \frac{1}{2}\log\left(  \frac{1+z}{1-z}\right)
,\cdots.
\]

It is worth noting that the familiar Koebe function is not a member of
$\Sigma$, since it maps the unit disk $\mathbb{U}$ univalently onto the entire
complex plane except the part of the negative real axis from $-1/4$ to
$-\infty$. Thus, clearly, the image of the domain does not contain the unit
disk $\mathbb{U}$. For a brief history and some intriguing examples of
functions and characterization of the class $\Sigma$, see Srivastava et al.
\cite{C27}, Yousef et al. \cite{C3,class,cc}, and Frasin and Aouf \cite{C28}. \\

In 1967, Lewin \cite{C21} investigated the bi-univalent function class
$\Sigma$ and showed that $|a_{2}|<1.51$. Subsequently, Brannan and Clunie
\cite{C22} conjectured that $|a_{2}|\leq\sqrt{2}.$ On the other hand,
Netanyahu \cite{C23} showed that $\underset{f\in\Sigma}{\max}$ $|a_{2}%
|=\frac{4}{3}.$ The best known estimate for functions in $\Sigma$ has been
obtained in 1984 by Tan \cite{C24}, that is, $|a_{2}|<1.485$. The coefficient
estimate problem for each of the following Taylor-Maclaurin coefficients
$|a_{n}|$ $(n\in\mathbb{N}\backslash\{1,2\})$ for each $f\in\Sigma$ given by
(\ref{ieq1}) is presumably still an open problem. \\

The Faber polynomials introduced by Faber \cite{12} play an important role in
various areas of mathematical sciences, especially in geometric function
theory. The recent publications \cite{15} and \cite{17} applying the Faber
polynomial expansions to meromorphic bi-univalent functions motivated us to
apply this technique to classes of analytic bi-univalent functions. In the
literature, there are only a few works determining the general coefficient
bounds $|a_{n}|$ for the analytic bi-univalent functions given by (\ref{ieq1})
using Faber polynomial expansions (see for example, \cite{16,20,19}). Hamidi
and Jahangiri \cite{16} considered the class of analytic bi-close-to-convex
functions. Jahangiri and Hamidi \cite{19} considered the class defined by
Frasin and Aouf \cite{C28}, and Jahangiri et al. \cite{20} considered the
class of analytic bi-univalent functions with positive real-part derivatives.
\newpage

\section{The class $\mathfrak{B}_{\Sigma}(\mu
,\lambda,\Phi,\xi)$}

Yousef et al. \cite{class} have introduced and studied the following subclass of analytic bi-univalent functions:

\begin{definition}
For $\lambda\geq1,$ $\mu\geq0,$ $\delta\geq0$
and $0\leq\alpha<1$, a function $f\in\Sigma$ given by (\ref{ieq1}) is said to
be in the class $\mathfrak{B}_{\Sigma}^{\mu}(\alpha,\lambda,\delta)$ if the
following conditions hold for all $z,w\in\mathbb{U}$:
\begin{equation}
\mbox{Re}\left(  (1-\lambda)\left(  \frac{f(z)}{z}\right)  ^{\mu}+\lambda
f^{\prime}(z)\left(  \frac{f(z)}{z}\right)  ^{\mu-1}+\xi\delta zf^{\prime
\prime}(z)\right)  >\alpha%
\end{equation}
and%
\begin{equation}
\mbox{Re}\left(  (1-\lambda)\left(  \frac{g(w)}{w}\right)  ^{\mu}+\lambda
g^{\prime}(w)\left(  \frac{g(w)}{w}\right)  ^{\mu-1}+\xi\delta wg^{\prime
\prime}(w)\right)  >\alpha, %
\end{equation}
where the function $g(w)=f^{-1}(w)$ is defined by (\ref{ieq2}) and $\xi
=\frac{2\lambda+\mu}{2\lambda+1}$.
\end{definition}

Using the Faber polynomial expansion of functions $f\in\mathcal{A}$ of the
form (\ref{ieq1}), the coefficients of its inverse map $g=f^{-1}$ may be
expressed as in \cite{z1}:%
\begin{equation}
g(w)=f^{-1}(w)=w+\sum\limits_{n=2}^{\infty}\frac{1}{n}K_{n-1}^{-n}\left(
a_{2},a_{3},...\right)  w^{n}, \label{ieq3}%
\end{equation}
where%
\begin{align}
K_{n-1}^{-n}  &  =\frac{\left(  -n\right)  !}{\left(  -2n+1\right)  !\left(
n-1\right)  !}a_{2}^{n-1}+\frac{\left(  -n\right)  !}{\left(  2\left(
-n+1\right)  \right)  !\left(  n-3\right)  !}a_{2}^{n-3}a_{3}+\frac{\left(
-n\right)  !}{\left(  -2n+3\right)  !\left(  n-4\right)  !}a_{2}^{n-4}%
a_{4}\label{ieq4}\\
&  +\frac{\left(  -n\right)  !}{\left(  2\left(  -n+2\right)  \right)
!\left(  n-5\right)  !}a_{2}^{n-5}\left[  a_{5}+\left(  -n+2\right)  a_{3}%
^{2}\right]  +\frac{\left(  -n\right)  !}{\left(  -2n+5\right)  !\left(
n-6\right)  !}a_{2}^{n-6}\nonumber\\
&  \left[  a_{6}+\left(  -2n+5\right)  a_{3}a_{4}\right]  +\sum\limits_{j\geq
7}a_{2}^{n-j}V_{j},\nonumber
\end{align}
such that $V_{j}$ with $7\leq j\leq n$ is a homogeneous polynomial in the
variables $a_{2},a_{3},...,a_{n}$ \cite{z3}. \\

In particular, the first three terms of $K_{n-1}^{-n}$ are%
\begin{equation}
K_{1}^{-2}=-2a_{2},\text{ }K_{2}^{-3}=3\left(  2a_{2}^{2}-a_{3}\right)
,\text{ }K_{3}^{-4}=-4\left(  5a_{2}^{3}-5a_{2}a_{3}+a_{4}\right)  .
\label{ieq5}%
\end{equation}
\bigskip

In general, for any $p\in%
\mathbb{N}
$ $:=\{1,2,3,...\}$, an expansion of $K_{n}^{p}$ is as in \cite{z1},%
\begin{equation}
K_{n}^{p}=pa_{n}+\frac{p(p-1)}{2}D_{n}^{2}+\frac{p!}{\left(  p-3\right)
!3!}D_{n}^{3}+\cdots+\frac{p!}{\left(  p-n\right)  !n!}D_{n}^{n}, \label{ieq6}%
\end{equation}
where $D_{n}^{p}=D_{n}^{p}\left(  a_{2},a_{3},...\right)  ,$ and by
\cite{z29}, $D_{n}^{m}\left(  a_{1},a_{2},...,a_{n}\right)  =$ $\sum
\limits_{n=1}^{\infty}\frac{m!}{i_{1}!...i_{n}!}a_{1}^{i_{1}}...a_{n}^{i_{n}}$
while $a_{1}=1,$ and the sum is taken over all non-negative integers
$i_{1},...,i_{n}$ satisfying $i_{1}+i_{2}+\cdots+i_{n}=m,$ $i_{1}%
+2i_{2}+\cdots+ni_{n}=n,$ it is clear that $D_{n}^{m}\left(  a_{1}%
,a_{2},...,a_{n}\right)  =a_{1}^{n}.$ \\

Now, we are ready to establish a new subclass of analytic and bi-univalent functions based
on subordination.

\begin{definition}
\label{def221} For $\lambda\geq1,$ $\mu\geq0,$ and $\delta\geq0$, A function
$f\in\Sigma$ is said to be in the class $\mathfrak{B}_{\Sigma}(\mu
,\lambda,\Phi,\xi),$ if the following subordinations are satisfied:%
\begin{equation}
(1-\lambda)\left(  \frac{f(z)}{z}\right)  ^{\mu}+\lambda f^{\prime}(z)\left(
\frac{f(z)}{z}\right)  ^{\mu-1}+\xi\delta zf^{\prime\prime}(z)\prec\Phi(z)
\label{def223}%
\end{equation}
and%
\begin{equation}
(1-\lambda)\left(  \frac{g(w)}{w}\right)  ^{\mu}+\lambda g^{\prime}(w)\left(
\frac{g(w)}{w}\right)  ^{\mu-1}+\xi\delta wg^{\prime\prime}(w)\prec\Phi(w)
\label{def224}%
\end{equation}
where the function $g(w)=f^{-1}(w)$ is defined by (\ref{ieq2}) and $\xi
=\frac{2\lambda+\mu}{2\lambda+1}$.
\end{definition}

\section{Coefficient bounds for the function class $\mathfrak{B}_{\Sigma}%
(\mu,\lambda,\Phi,\xi)$}

\begin{theorem}
\label{thm1} For $\lambda\geq1,$ $\mu\geq0,$and $\delta\geq0$, let the
function $f\in$ $\mathfrak{B}_{\Sigma}(\mu,\lambda,\Phi,\xi)$ be given by
(\ref{ieq1}). Then
\[
\left\vert a_{2}\right\vert \leq\min\left\{  \frac{2}{\mu+\lambda+2\xi\delta
},\sqrt{\frac{8}{\left(  \mu+2\lambda+6\xi\delta\right)  \left(  \mu+1\right)
}}\right\}  \text{ }%
\]
and%
\[
\left\vert a_{3}\right\vert \leq\min\left\{
\begin{array}
[c]{c}%
\frac{1}{\left(  1+\frac{6\delta}{2\lambda+1}\right)  }\left[  \frac
{4}{\left(  \mu+\lambda+2\xi\delta\right)  ^{2}}+\frac{2}{\left(  \mu
+2\lambda+6\xi\delta\right)  }\right]  ,\\
\frac{1}{\left(  1+\frac{6\delta}{2\lambda+1}\right)  }\left[  \frac
{8}{\left(  \mu+2\lambda+6\xi\delta\right)  \left(  \mu+1\right)  }+\frac
{2}{\left(  \mu+2\lambda+6\xi\delta\right)  }\right]
\end{array}
\right\}.
\]

\end{theorem}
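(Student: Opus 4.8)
The plan is to use the Faber polynomial expansion machinery set up in the excerpt to extract the relevant coefficient relations from the two subordination conditions \eqref{def223} and \eqref{def224}, and then to derive both bounds by two independent routes (explaining the $\min$ in each estimate).

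First I would expand the left-hand side of \eqref{def223} as a power series. Writing $\left(f(z)/z\right)^{\mu}$ and $f'(z)\left(f(z)/z\right)^{\mu-1}$ via the binomial-type Faber expansions, and adding $\xi\delta z f''(z)$, I collect the coefficients of $z$ and $z^{2}$. The subordination $F(z)\prec\Phi(z)$ means there is a Schwarz function $\varpi$ with $F(z)=\Phi(\varpi(z))$; writing $\varpi(z)=c_{1}z+c_{2}z^{2}+\cdots$ with $|c_{k}|\leq 1$, I match coefficients to get expressions of the form
\begin{equation}
(\mu+\lambda+2\xi\delta)\,a_{2}=\Phi_{1}c_{1}
\end{equation}
and a corresponding relation for the $z^{2}$-coefficient involving $a_{2}^{2}$ and $a_{3}$. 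Doing the same for the inverse map $g=f^{-1}$ in \eqref{def224}, using the coefficient formulas \eqref{ieq5} for $K_{1}^{-2}$ and $K_{2}^{-3}$ to replace the coefficients of $g$ by $a_{2},a_{3}$, yields a second pair of matched equations with its own Schwarz coefficients $d_{1},d_{2}$.

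Next I would combine these. The first-order equations give $a_{2}$ directly in terms of $c_{1}$ and $d_{1}$; since by symmetry the leading relations force $c_{1}=-d_{1}$, and using $|\Phi_{1}|\le 2$ (Carathéodory, as noted in the excerpt) together with $|c_{1}|\le 1$, I obtain the first candidate bound $|a_{2}|\le \frac{2}{\mu+\lambda+2\xi\delta}$. For the second candidate, I add the two second-order matched equations; the $a_{3}$-terms should combine so that a multiple of $a_{2}^{2}$ survives on the left, namely something proportional to $(\mu+2\lambda+6\xi\delta)(\mu+1)a_{2}^{2}$, while the right side is a Carathéodory-bounded combination of $c_{2}+d_{2}$ (and $c_{1}^{2}+d_{1}^{2}$). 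Estimating with $|\Phi_{n}|\le 2$ and $|c_{k}|,|d_{k}|\le 1$ gives $|a_{2}|^{2}\le \frac{8}{(\mu+2\lambda+6\xi\delta)(\mu+1)}$, the second entry of the $\min$. For $|a_{3}|$ I would solve the second-order equations for $a_{3}$ (the factor $1+\frac{6\delta}{2\lambda+1}$ arises from the coefficient of $a_{3}$ after writing $\xi\delta=\delta\cdot\frac{2\lambda+\mu}{2\lambda+1}$ and separating the $a_{3}$-contribution), then substitute each of the two available bounds for $|a_{2}|^{2}$ to produce the two expressions inside the $\min$.

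The main obstacle I anticipate is the bookkeeping in the Faber/binomial expansion of $\left(f(z)/z\right)^{\mu}$ for general real $\mu\ge 0$: one must correctly track how the factor $\mu$ (and $\mu-1$, and $\mu+1$) enters the $z$- and $z^{2}$-coefficients, and verify that the combination with the $\lambda f'(z)(\cdot)^{\mu-1}$ and $\xi\delta z f''(z)$ terms really produces the clean factors $\mu+\lambda+2\xi\delta$, $\mu+2\lambda+6\xi\delta$, and $\mu+1$ that appear in the statement. A secondary care point is justifying $c_{1}=-d_{1}$ and handling the mixed $c_{1}^{2},d_{1}^{2}$ terms so that the triangle inequality yields exactly the constant $8$ rather than a weaker bound; once those algebraic identifications are pinned down, the remaining estimates are routine applications of $|\Phi_{n}|\le 2$ and $|c_{k}|\le 1$.
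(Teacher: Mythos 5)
Your proposal is correct and follows essentially the same route as the paper's own proof: you extract the same four coefficient relations (the paper's (\ref{ieq11})--(\ref{ieq14})), obtain one bound on $|a_{2}|$ from the linear relations, the other by adding the two quadratic relations, and bound $|a_{3}|$ by subtracting them (where $t_{1}=-s_{1}$ makes the $\Phi_{2}$-term drop out) and then substituting each of the two available bounds for $|a_{2}|^{2}$. The structural factors you flag as the delicate bookkeeping points, namely $\mu+\lambda+2\xi\delta$, $\mu+2\lambda+6\xi\delta$, $\mu+1$, and $1+\frac{6\delta}{2\lambda+1}$, come out exactly as you predict, so there is no substantive difference from the paper's argument.
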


\begin{proof}
Let $\mathfrak{B}_{\Sigma}(\mu,\lambda,\Phi,\xi)$.The inequalities
(\ref{def223}) and (\ref{def224}) imply the existence of two positive real
part functions%
\[
\varpi(z)=1+\sum\limits_{n=1}^{\infty}t_{n}z^{n}%
\]
and%
\[
\varphi(w)=1+\sum\limits_{n=1}^{\infty}s_{n}z^{n}%
\]
where $\operatorname{Re}\left(  \varpi(z)\right)  >0$ and $\operatorname{Re}%
\left(  \varphi(w)\right)  >0$ in $\mathcal{C}$ so that
\begin{equation}
(1-\lambda)\left(  \frac{f(z)}{z}\right)  ^{\mu}+\lambda f^{\prime}(z)\left(
\frac{f(z)}{z}\right)  ^{\mu-1}+\xi\delta zf^{\prime\prime}(z)=\Phi\left(
\varpi(z)\right)  , \label{ieq9}%
\end{equation}%
\begin{equation}
(1-\lambda)\left(  \frac{g(w)}{w}\right)  ^{\mu}+\lambda g^{\prime}(w)\left(
\frac{g(w)}{w}\right)  ^{\mu-1}+\xi\delta wg^{\prime\prime}(w)=\Phi\left(
\varphi(w)\right)  . \label{ieq10}%
\end{equation}

It follows from (\ref{ieq9}) and (\ref{ieq10}) that
\begin{equation}
\left(  \mu+\lambda+2\xi\delta\right)  a_{2}=\Phi_{1}t_{1} \label{ieq11}%
\end{equation}%
\begin{equation}
\left(  \mu+2\lambda+6\xi\delta\right)  \left[  \frac{\mu-1}{2}a_{2}%
^{2}+\left(  1+\frac{6\delta}{2\lambda+1}\right)  a_{3}\right]  =\Phi_{1}%
t_{2}+\Phi_{2}t_{1}^{2}, \label{ieq12}%
\end{equation}
and%
\begin{equation}
-\left(  \mu+\lambda\right)  a_{2}=\Phi_{1}s_{1}, \label{ieq13}%
\end{equation}%
\begin{equation}
\left(  \mu+2\lambda+6\xi\delta\right)  \left[  \frac{\mu+3}{2}a_{2}%
^{2}-\left(  1+\frac{6\delta}{2\lambda+1}\right)  a_{3}\right]  =\Phi_{1}%
s_{2}+\Phi_{2}s_{1}^{2}. \label{ieq14}%
\end{equation}

From (\ref{ieq11}) and (\ref{ieq13}), we find%
\begin{equation}
\left\vert a_{2}\right\vert \leq\frac{\left\vert \Phi_{1}t_{1}\right\vert
}{\mu+\lambda+2\xi\delta}=\frac{\left\vert \Phi_{1}s_{1}\right\vert }%
{\mu+\lambda+2\xi\delta}\leq\frac{2}{\mu+\lambda+2\xi\delta}. \label{ieq155}%
\end{equation}

From (\ref{ieq12}) and (\ref{ieq14}), we get%
\[
\left(  \mu+2\lambda+6\xi\delta\right)  \left(  \mu+1\right)  a_{2}^{2}%
=\Phi_{1}\left(  t_{2}+s_{2}\right)  +\Phi_{2}\left(  t_{1}^{2}+s_{1}%
^{2}\right)
\]
or, equivalently%
\begin{equation}
\left\vert a_{2}\right\vert \leq\sqrt{\frac{8}{\left(  \mu+2\lambda+6\xi
\delta\right)  \left(  \mu+1\right)  }}. \label{ieq166}%
\end{equation}

Next, in order to find the bound on the coefficient $\left\vert a_{3}%
\right\vert $, we subtract (\ref{ieq14}) from (\ref{ieq12}). We thus get%
\begin{equation}
2\left(  \mu+2\lambda+6\xi\delta\right)  \left[  \left(  1+\frac{6\delta
}{2\lambda+1}\right)  a_{3}-a_{2}^{2}\right]  =\Phi_{1}\left(  t_{2}%
-s_{2}\right)  +\Phi_{2}\left(  t_{1}^{2}-s_{1}^{2}\right)  \label{ieq177}%
\end{equation}
or%
\begin{align}
\left\vert a_{3}\right\vert  &  \leq\frac{1}{\left(  1+\frac{6\delta}%
{2\lambda+1}\right)  }\left[  \left\vert a_{2}\right\vert ^{2}+\frac
{\left\vert \Phi_{1}\left(  t_{2}-s_{2}\right)  \right\vert }{2\left(
\mu+2\lambda+6\xi\delta\right)  }\right] \label{ieq188}\\
&  =\frac{1}{\left(  1+\frac{6\delta}{2\lambda+1}\right)  }\left[  \left\vert
a_{2}\right\vert ^{2}+\frac{2}{\left(  \mu+2\lambda+6\xi\delta\right)
}\right]  .\nonumber
\end{align}

Upon substituting the value of $a_{2}^{2}$ from (\ref{ieq155}) and
(\ref{ieq166}) into (\ref{ieq188}), it follows that%
\[
\left\vert a_{3}\right\vert \leq\frac{1}{\left(  1+\frac{6\delta}{2\lambda
+1}\right)  }\left[  \frac{4}{\left(  \mu+\lambda+2\xi\delta\right)  ^{2}%
}+\frac{2}{\left(  \mu+2\lambda+6\xi\delta\right)  }\right]
\]
and%
\[
\left\vert a_{3}\right\vert \leq\frac{1}{\left(  1+\frac{6\delta}{2\lambda
+1}\right)  }\left[  \frac{8}{\left(  \mu+2\lambda+6\xi\delta\right)  \left(
\mu+1\right)  }+\frac{2}{\left(  \mu+2\lambda+6\xi\delta\right)  }\right]
\]

Which completes the proof of \ref{thm1}.
\end{proof}

\begin{theorem}
\label{thm2} Let $\mathfrak{B}_{\Sigma}(\mu,\lambda,\Phi,\xi).$ If $a_{m}=0$
with $2\leq m\leq n-1,$ then
\begin{equation}
\hspace{-1in}\left\vert a_{n}\right\vert \leq\frac{2}{\mu+\left(  n-1\right)
\lambda+n\left(  n-1\right)  \xi\delta}\text{ }(n\geq4). \label{ieq15}%
\end{equation}

\end{theorem}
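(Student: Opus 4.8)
The plan is to exploit the gap hypothesis $a_{2}=a_{3}=\cdots=a_{n-1}=0$ to collapse the Faber polynomial expansions down to a single surviving coefficient, and then read off $a_{n}$ directly from the relations produced by the subordination, exactly as in the proof of Theorem~\ref{thm1} but now at the level of $z^{n-1}$. First I would substitute $f(z)=z+a_{n}z^{n}+\cdots$ into the left-hand operator of (\ref{ieq9}). Under the gap assumption every term of order $z,z^{2},\dots,z^{n-2}$ in each of the pieces $\left(f(z)/z\right)^{\mu}$, $f'(z)\left(f(z)/z\right)^{\mu-1}$ and $zf''(z)$ must vanish, since by (\ref{ieq6}) any such contribution is a product involving at least one of the zero coefficients $a_{2},\dots,a_{n-1}$ with no compensating factor of $a_{n}$. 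Collecting the coefficient of $z^{n-1}$ then yields $(1-\lambda)\mu+\lambda(n+\mu-1)+n(n-1)\xi\delta=\mu+(n-1)\lambda+n(n-1)\xi\delta$, so the operator reduces to $1+\big[\mu+(n-1)\lambda+n(n-1)\xi\delta\big]a_{n}z^{n-1}+\cdots$.

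Next I would propagate the gap to the inverse map. Using the Faber expansion (\ref{ieq3})--(\ref{ieq4}) for $g=f^{-1}$, each coefficient $\tfrac{1}{k}K_{k-1}^{-k}$ with $2\le k\le n-1$ is a polynomial in $a_{2},\dots,a_{k}$ alone and hence vanishes, while $K_{n-1}^{-n}$ retains only its linear term $-n\,a_{n}$; thus $g(w)=w-a_{n}w^{n}+\cdots$. Running the identical computation on the operator in (\ref{ieq10}) shows it equals $1-\big[\mu+(n-1)\lambda+n(n-1)\xi\delta\big]a_{n}w^{n-1}+\cdots$.

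The subordination side is where the gap pays off. Since the left-hand side of (\ref{ieq9}) has vanishing coefficients at $z^{1},\dots,z^{n-2}$, matching against $\Phi(\varpi(z))=1+\Phi_{1}t_{1}z+\cdots$ forces $t_{1}=t_{2}=\cdots=t_{n-2}=0$ inductively (the coefficient of $z^{k}$ being $\Phi_{1}t_{k}$ plus terms in the already-vanished $t_{j}$). Consequently the coefficient of $z^{n-1}$ on the right collapses to the single term $\Phi_{1}t_{n-1}$, giving $\big[\mu+(n-1)\lambda+n(n-1)\xi\delta\big]a_{n}=\Phi_{1}t_{n-1}$, and likewise from (\ref{ieq10}) one obtains $-\big[\mu+(n-1)\lambda+n(n-1)\xi\delta\big]a_{n}=\Phi_{1}s_{n-1}$. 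Because $\varpi$ is a Schwarz function whose first nonzero coefficient sits at order $n-1$, the generalized Schwarz lemma gives $|t_{n-1}|\le1$, while Caratheodory's lemma gives $|\Phi_{1}|\le2$; combining these yields $|a_{n}|\le 2/\big(\mu+(n-1)\lambda+n(n-1)\xi\delta\big)$, as claimed.

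The main obstacle I anticipate is the bookkeeping in the first paragraph: verifying rigorously that, under the gap condition, none of the nonlinear Faber contributions to the three operator pieces survive at orders below $z^{n-1}$, and that the surviving linear coefficient assembles to exactly $\mu+(n-1)\lambda+n(n-1)\xi\delta$. A secondary subtlety is the degenerate case $\Phi_{1}=0$: there the cascade $t_{1}=\cdots=t_{n-2}=0$ must be justified differently, so I would either assume $\Phi_{1}\ne0$ or observe that $\Phi_{1}=0$ forces $a_{n}=0$, which satisfies the stated bound trivially.
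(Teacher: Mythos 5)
Your proposal is correct and follows essentially the same route as the paper: expand both sides of (\ref{ieq9})--(\ref{ieq10}) in powers of $z$ and $w$, use the gap hypothesis to reduce $g$ to $w-a_{n}w^{n}+\cdots$ (so $A_{n}=-a_{n}$) and the coefficient equations to $\left[\mu+(n-1)\lambda+n(n-1)\xi\delta\right]a_{n}=\Phi_{1}t_{n-1}$ and $-\left[\mu+(n-1)\lambda+n(n-1)\xi\delta\right]a_{n}=\Phi_{1}s_{n-1}$, then bound via $\left\vert\Phi_{1}\right\vert\leq2$ and $\left\vert t_{n-1}\right\vert,\left\vert s_{n-1}\right\vert\leq1$. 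If anything, you are more careful than the paper: the cascade $t_{1}=\cdots=t_{n-2}=0$ (needed so that the coefficient of $z^{n-1}$ in $\Phi(\varpi(z))$ collapses to the single term $\Phi_{1}t_{n-1}$) and the attendant $\Phi_{1}\neq0$ caveat are made explicit in your argument, whereas the paper passes over both with an unexplained ``and so.''
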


\begin{proof}
By using the Faber polynomial expansion of functions $f\in\mathcal{A}$ of the
form (\ref{ieq1}) and its inverse map $g=f^{-1}$, we can write%
\begin{equation}
(1-\lambda)\left(  \frac{f(z)}{z}\right)  ^{\mu}+\lambda f^{\prime}(z)\left(
\frac{f(z)}{z}\right)  ^{\mu-1}+\xi\delta zf^{\prime\prime}(z)=1+\sum
\limits_{n=2}^{\infty}F_{n-1}\left(  a_{2},a_{3},...,a_{n}\right)  z^{n-1}
\label{ieq17}%
\end{equation}
and%
\begin{equation}
(1-\lambda)\left(  \frac{g(w)}{w}\right)  ^{\mu}+\lambda g^{\prime}(w)\left(
\frac{g(w)}{w}\right)  ^{\mu-1}+\xi\delta wg^{\prime\prime}(w)=1+\sum
\limits_{n=2}^{\infty}F_{n-1}\left(  A_{2},A_{3},...,A_{n}\right)  w^{n-1}
\label{ieq18}%
\end{equation}
where%
\begin{equation}
F_{1}=(\mu+\lambda+2\xi\delta)a_{2},\text{ }F_{2}=(\mu+2\lambda+6\xi
\delta)\left[  \frac{\mu-1}{2}a_{2}^{2}+\left(  1+\frac{6\delta}{2\lambda
+1}\right)  a_{3}\right]  \label{ieq20}%
\end{equation}
and, in general (see \cite{faber})%
\begin{align}
F_{n-1}\left(  a_{2},a_{3},...,a_{n}\right)   &  =\left[  \mu+\left(
n-1\right)  \lambda+n\left(  n-1\right)  \xi\delta\right]  \times\left[
\left(  \mu-1\right)  !\right] \label{ieq21}\\
&  \ \ \times\left[  \sum\limits_{n=2}^{\infty}\frac{a_{2}^{i_{1}}a_{3}%
^{i_{2}}...a_{n}^{i_{n-1}}}{i_{1}!i_{2}!\cdots i_{n}!\left[  \mu-\left(
i_{1}+i_{2}+\cdots+i_{n-1}\right)  \right]  !}\right]
\end{align}

Next, by using the Faber polynomial expansion of functions $\varpi,\varphi
\in\mathcal{C}$, we also obtain%
\begin{equation}
\Phi\left(  \varpi(z)\right)  =1+\sum\limits_{n=1}^{\infty}\sum\limits_{k=1}%
^{\infty}\Phi_{n}F_{n}^{k}\left(  t_{1},t_{2},...,t_{n}\right)  z^{n},
\label{ieq22}%
\end{equation}
and%
\begin{equation}
\Phi\left(  \varphi(z)\right)  =1+\sum\limits_{n=1}^{\infty}\sum
\limits_{k=1}^{\infty}\Phi_{n}F_{n}^{k}\left(  s_{1},s_{2},...,s_{n}\right)
w^{n}. \label{ieq25}%
\end{equation}

Comparing the corresponding coefficients yields
\[
\left[  \mu+\left(  n-1\right)  \lambda+n\left(  n-1\right)  \xi\delta\right]
a_{n}=\sum\limits_{k=1}^{n-1}\Phi_{n}F_{n-1}^{k}\left(  t_{1},t_{2}%
,...,t_{n-1}\right)  \text{ }(n\geq2)
\]
and%
\begin{equation}
\left[  \mu+\left(  n-1\right)  \lambda+n\left(  n-1\right)  \xi\delta\right]
A_{n}=\sum\limits_{k=1}^{n-1}\Phi_{n}F_{n-1}^{k}\left(  s_{1},s_{2}%
,...,s_{n-1}\right)  \text{ }(n\geq2). \label{ieq26}%
\end{equation}

Note that for $a_{m}=0,$ $2\leq m\leq n-1,$ we have $A_{n}=-a_{n}$ and so
\[
\left[  \mu+\left(  n-1\right)  \lambda+n\left(  n-1\right)  \xi\delta\right]
a_{n}=\Phi_{1}t_{n-1},
\]%
\begin{equation}
-\left[  \mu+\left(  n-1\right)  \lambda+n\left(  n-1\right)  \xi
\delta\right]  a_{n}=\Phi_{1}s_{n-1}, \label{ieq27}%
\end{equation}

Now taking the absolute values of either of the above two equations and using
the facts that $\left\vert \Phi_{1}\right\vert \leq2,$ $\left\vert
t_{n-1}\right\vert \leq1,$ and $\left\vert s_{n-1}\right\vert \leq1,$ we
obtain
\begin{align}
\left\vert a_{n}\right\vert  &  \leq\frac{\left\vert \Phi_{1}t_{n-1}%
\right\vert }{\mu+\left(  n-1\right)  \lambda+n\left(  n-1\right)  \xi\delta
}=\frac{\left\vert \Phi_{1}s_{n-1}\right\vert }{\mu+\left(  n-1\right)
\lambda+n\left(  n-1\right)  \xi\delta}\label{ieq28}\\
&  \leq\frac{2}{\mu+\left(  n-1\right)  \lambda+n\left(  n-1\right)  \xi
\delta}%
\end{align}

This evidently completes the proof of \ref{thm2}.
\end{proof}

\begin{remark}
\label{rem2} As a final remark, for $\delta=0$ in

(i) Theorem \ref{thm1} we obtain Theorem 1 in \cite{atl}. \vspace{0.05in}

(ii) Theorem \ref{thm2} we obtain Theorem 2 in \cite{atl}. \vspace{0.05in}
\end{remark}


\begin{thebibliography}{99}                                                                                               %

\bibitem {z1}H. Airault; A. Bouali, \emph{Differential calculuson the Faber
polynomials}, Bull. Sci. Math. 130 (3) (2006) 179--222.

\bibitem {z3}H. Airault; J. Ren, \emph{An algebra of differential operators
and generating functionson the set of univalent functions}, Bull. Sci. Math.
126 (5) (2002) 343--367.

\bibitem{C6}
T.~Al-Hawary; B.A.~Frasin; F.~Yousef, Coefficients estimates for certain classes of analytic functions of complex order, Afrika Matematika 29(7-8) (2018), 1265-1271.

\bibitem {atl}S. Altinkaya; S. Y. Tokg\"{o}z, \emph{On the bounds of general
subclasses of analytic and bi-univalent functions associated with
subordination}, In 4th International Conference on Analysis and its
Applications, September 11-14, 2018, Kirsehir/Turkey, p. 135.

\bibitem {faber}A. Amourah, \emph{Faber polynomial coefficient estimates for a
class of analytic bi-univalent functions}, arXiv preprint arXiv:1810.07018 (2018).

\bibitem {C4}A.A.~Amourah; F.~Yousef, \emph{Some properties of a class of analytic functions involving a new generalized differential operator}, Boletim da Sociedade Paranaense de Matem\'{a}tica, In press.

\bibitem {C1}A.A.~Amourah; F.~Yousef; T.~Al-Hawary, M.~Darus, \emph{A certain
fractional derivative operator for p-valent functions and new class of
analytic functions with negative coefficients}, Far East Journal of
Mathematical Sciences 99(1) (2016) 75-87.

\bibitem {C5}A.A.~Amourah; F.~Yousef; T.~Al-Hawary; M.~Darus, \emph{On H}%
$_{3}$\emph{(p) Hankel determinant for certain subclass of p-valent
functions}, Ital. J. Pure Appl. Math 37 (2017) 611-618.

\bibitem {C22}D.A.~Brannan; J.G.~Clunie, \emph{Aspects of contemporary complex
analysis} (Proceedings of the NATO Advanced Study Institute held at the
University of Durham, Durham; July 120, 1979), Academic Press, New York and
London, 1980.

\bibitem {Duren}P.L.~Duren, \emph{Univalent Functions}, Grundlehren der
Mathematischen Wissenschaften, Band 259, Springer-Verlag, New York, Berlin,
Heidelberg and Tokyo, 1983.

\bibitem {12}G.~Faber, \emph{\"{U}ber polynomische entwickelungen},
Mathematische Annalen 57(3) (1903) 389-408.

\bibitem {C28}B.A.~Frasin; M.K.~Aouf, \emph{New subclasses of bi-univalent
functions}, Appl. Math. Lett. 24(9) (2011) 1569--1573.

\bibitem {15}S.G. Hamidi; S.A. Halim; J.M. Jahangiri, \emph{Coefficient
estimates for a class of meromorphic bi-univalent functions}, C. R. Acad. Sci.
Paris, Ser. I 351 (9--10) (2013) 349--352.

\bibitem {17}S.G.~Hamidi; T. Janani; G. Murugusundaramoorthy, J.M. Jahangiri,
\emph{Coefficient estimates for certain classes of meromorphic bi-univalent
functions}, C. R. Acad. Sci. Paris, Ser. I 352 (4) (2014) 277--282.

\bibitem {16}S.G.~Hamidi; J.M. Jahangiri, \emph{Faber polynomial coefficient
estimates for analytic bi-close-to-convex functions}, C. R. Acad. Sci. Paris,
Ser. I 352 (1) (2014) 17--20.

\bibitem {20}J.M.~Jahangiri; S.G. Hamidi; S.A. Halim, \emph{Coefficients of
bi-univalent functions with positive real part derivatives}, Bull. Malays.
Math. Soc., in press, http://math.usm.my/bulletin/pdf/acceptedpapers/2013-04-050-R1.pdf.

\bibitem {19}J.M.~Jahangiri; S.G. Hamidi, \emph{Coefficient estimates for
certain classes of bi-univalent functions}, Int. J. Math. Math. Sci. (2013),
Article ID 190560, 4 p.

\bibitem {C21}M.~Lewin, \emph{On a coefficient problem for bi-univalent
functions}, Proc. Amer. Math. Soc. 18 (1967) 63--68.

\bibitem {C23}E.~Netanyahu, \emph{The minimal distance of the image boundary
from the origin and the second coefficient of a univalent function in }%
$|z|<1$, Arch. Rational Mech. Anal. 32 (1969) 100--112.

\bibitem {C27}H.M.~Srivastava; A.K.~Mishra; P.~Gochhayat, \emph{Certain
subclasses of analytic and bi-univalent functions}, Appl. Math. Lett. 23(10)
(2010) 1188--1192.

\bibitem {C24}D.L.~Tan, \emph{Coefficicent estimates for bi-univalent
functions}, Chin. Ann. Math. Ser. A 5 (1984) 559--568.

\bibitem {z29}P.G.~Todorov, \emph{On the Faber polynomials of the univalent
functions of class $\Sigma$}, J. Math. Anal. Appl. 162 (1) (1991) 268--276.

\bibitem {C2}F.~Yousef; A.A.~Amourah; M.~Darus, \emph{Differential sandwich
theorems for p-valent functions associated with a certain generalized
differential operator and integral operator}, Italian Journal of Pure and
Applied Mathematics 36 (2016) 543-556.

\bibitem {C3}F.~Yousef; B.A.~Frasin; T.~Al-Hawary, \emph{Fekete-Szeg\"{o}
Inequality for Analytic and Bi-univalent Functions Subordinate to Chebyshev
Polynomials}, arXiv preprint arXiv:1801.09531 (2018).

\bibitem {class}F. Yousef; S. Alroud; M. Illafe, \emph{New subclasses of
analytic and bi-univalent functions endowed with coefficient estimate
problems}, arXiv preprint arXiv:1808.06514 (2018).

\bibitem {cc}F. Yousef; S. Alroud; M. Illafe, \emph{A Comprehensive Subclass of Bi-Univalent Functions Associated with Chebyshev Polynomials of the Second Kind}, arXiv preprint arXiv:1809.09365 (2018).

\end{thebibliography}
\end{document}